\theoremstyle{plain}
\newtheorem{theorem}{Theorem}[section]
\newtheorem{lemma}[theorem]{Lemma}
\newtheorem{corollary}[theorem]{Corollary}
\theoremstyle{definition}
\newtheorem{definition}[theorem]{Definition}
\newcommand{\gra}[1]{{\mathbf{#1}}}  
\newcommand{\defn}[1]{\textit{#1}}
\begin{document}

\begin{frontmatter}

\title{Counting the Number of Minimal Paths in Weighted Coloured--Edge Graphs}
\author{Andrew Ensor\corref{cor1}}
\address{School of Computing and Mathematical Sciences, Auckland University of Technology, %
2-14 Wakefield St, Auckland 1142, New Zealand.}
\ead{andrew.ensor@aut.ac.nz}
\author{Felipe Lillo\corref{cor2}}
\ead{flillo@ucm.cl}
\address{School of Management Sciences, Universidad Cat\'{o}lica del Maule, %
3605 Avenida San Miguel, Talca,
Chile.}
\cortext[cor1]{Tel:+64 9 921 9999  extn:8485}
\cortext[cor2]{Tel:+56 71 203 100  extn:533}

\begin{abstract}
A weighted coloured--edge graph is a graph for which each edge is assigned both a
positive weight and a discrete colour, and can be used to model transportation
and computer networks in which there are multiple transportation modes.
In such a graph paths are compared by their total weight in each colour,
resulting in a Pareto set of minimal paths from one vertex to another.
This paper will give a tight upper bound on the cardinality of a minimal set of paths
for any weighted coloured--edge graph.
Additionally, a bound is presented on the expected number of minimal paths
in weighted bicoloured--edge graphs.
\end{abstract}

\begin{keyword}
graph theory \sep minimal paths \sep multimodal network
\sep transportation modes \sep weighted coloured--edge graph


\MSC[2010] 05C38 \sep 05C22

\end{keyword}

\end{frontmatter}


\section{Introduction}\label{Sec:Introduction}

\begin{definition}
A \defn{weighted coloured--edge graph} $\gra{G}=\langle V, E, \omega, \lambda \rangle$
consists of a directed multigraph $\langle V, E \rangle$ with vertex set $V$ and edge set
$E$, a \defn{weight} function $\omega\colon E\rightarrow \mathbb{R}^+$,
and a (surjective) \defn{colour} function $\lambda\colon E\rightarrow M$,
where $M$ is a set of possible colours for the edges.
\end{definition}

Hence associated with each edge $e\in E$, there is an initial vertex $u\in V$ and a
terminal vertex $v\in V$, a positive weight $\omega(e)\in \mathbb{R}^+$,
and a colour $\lambda(e)\in M$.
The graph $\gra{G}$ is said to be \defn{finite} if both $V$ and $E$ are finite sets,
in which case $M$ is also finite.

Concepts similar to weighted coloured--edge graphs have received little attention
in the literature. Cl\'{i}maco et al. \cite{Climaco2010} experimentally studied
the number of spanning trees in a weighted graph whose edges are labelled with
a colour. In that work, weight and colour are two criteria both to be minimized and the
proposed algorithm generates a set of non--dominated spanning trees. The computation
of coloured paths in a weighted coloured--edge graph is investigated by Xu et al. \cite{Xu2009}.
The main feature of their approach is a graph reduction technique based on a priority
rule. This rule basically transforms a weighted coloured--edge multidigraph into a
coloured--vertex digraph by applying algebraic operations to the
adjacency matrix. Additionally, the authors provide an algorithm to identify coloured
source--destination paths. Nevertheless, the algorithm is not intended
for general instances because its input is a unit--weighted coloured multidigraph and only
paths not having consecutive edges equally coloured are considered.
In his paper Manoussakis \cite{Manoussakis1995} studied the computation of
paths with specific colour patterns in unweighted coloured--edge graphs. Particularly,
his study focuses on alternating coloured--edge paths in complete coloured--edge
graphs. Finally, Bang \& Gutin \cite{BangJensen199739} presents a survey of
the computation of alternating cycles and paths in coloured--edge multigraphs.
Manoussakis as well as Bang \& Gutin only focus on unweighted graphs.

As this paper investigates shortest paths and all weights are positive,
it is presumed that graphs do not have any self-loops.
Similarly, it can be presumed that for any two vertices $x,y\in V$ and colour $c\in M$
there is at most one edge $e_{xy}\in E$ from $u$ to $v$ for which $\lambda(e_{xy})=c$.
Therefore, for a finite graph with $n=\left|V\right|$,
$m=\left|E\right|$, $k=\left|M\right|$,
there is a bound on the number of edges, given by $m\leq kn(n-1)$.

A \defn{path} in $\gra{G}$ consists of a finite set of edges
$\left\{e_{x_0 x_1},e_{x_1 x_2},\dots,e_{x_{l-1} x_l}\right\}$ for which the terminal vertex
$x_i$ of each $e_{x_{i-1} x_i}$ is the initial vertex of $e_{x_i x_{i+1}}$,
and the path is considered to be from the source $x_0$ to the destination $x_l$.
The path is called \defn{simple} if no two edges in the path have the same initial vertex
nor the same terminal vertex.
It is straightforward to verify that a finite graph has at most the following
number of simple paths from a chosen source vertex to a destination vertex:
\begin{displaymath}
  k + k^2(n-2) + k^3(n-2)(n-3) + \dots + k^{n-1}(n-2)(n-3)\cdots 1.
\end{displaymath}
For any path $p_{uv}=\left\{e_{x_0 x_1},e_{x_1 x_2},\dots,e_{x_{l-1} x_l}\right\}$
from a vertex $u=x_0$ to a vertex $v=x_l$ and any colour $c\in M$
the path weight in colour $c$ is defined by:
\begin{displaymath}
  \omega_c(p_{uv}) = \sum_{\lambda(e_{x_i x_{i+1}})=c} \omega(e_{x_i x_{i+1}}),
\end{displaymath}
namely the sum of the weights for those edges that have colour $c$.
The \defn{weight} of a path is represented as a $k$-tuple
$\left(\omega_{c_1}(p_{uv}),\ldots,\omega_{c_i}(p_{uv}),\ldots,\omega_{c_k}(p_{uv})\right)$,
giving the total weight of the path in each colour.
A preorder $\leq$ can be defined on the paths from $u$ to $v$ by
$p_{uv}\leq q_{uv}$ if for every colour $c$ one has $\omega_c(p_{uv})\leq\omega_c(q_{uv})$,
essentially using the partial order defined on the weights by the product
partial ordering on $\mathbb{R}^k$.

From the above definitions it is apparent that the concept of
a weighted coloured--edge graph with $k$ colours can equivalently be formulated
as a multiweighted multigraph where each edge is assigned a $k$-tuple of non-negative
weights $\left(w_{c_1},\dots,w_{c_i},\dots,w_{c_k}\right)$ and exactly one $w_{c_i}>0$.
However, multiweighted graphs are mostly used in multicriteria optimization applications
where the weight components correspond to quantities to be optimized, such as cost and time,
so edges typically contribute toward more than just one quantity.
For this reason multiweighted graphs whose edge weights are zero in all but one component
have not received attention in the literature.
Furthermore, most multicriteria optimization techniques do not consider multigraphs,
although some can be generalized to include such possibilities.
By contrast, weighted coloured--edge graphs are suitable for
multimodal network applications,
which can be modeled using weighted coloured--edge graphs
where the weights represent some form of distance in a network
and the colours represent the mode of transportation.
In the past most multimodal network optimization applications have needed constraints
to be placed on the network to make the determination of a shortest path fulfilling
application-specific criteria tractable.
Often such criteria can be specified based on the total path weights in each mode,
so if the set of minimal paths has manageable cardinality then the
application-specific criteria might only need be
applied to the set of minimal paths rather than to potentially $O\left(k^{n-1}(n-2)!\right)$ paths.

This paper is interested in establishing bounds on the number of paths that can be minimal
from one vertex $u$ to another vertex $v$.
Certainly, if the graph is disconnected then there might be no paths
from $u$ to $v$, and if it is connected with edges in each allowed colour
then there are at least $k$ minimal paths, since paths that have edges solely in
single but distinct colours are incomparable.
However, an upper bound might appear to be difficult to establish.

As a simple example, consider the following graph with three vertices,
six edges, and two possible colours $M=\left\{\mbox{red}, \mbox{green}\right\}$
where red edges are indicated by dashed arcs.

\begin{center}
\begin{pspicture}[shift=-1](0,-0.3)(4.5,2.5)
  \rput(0.25,0.25){\rnode{u}{\psframebox{$u$\rule[-0.75mm]{0mm}{3mm}}}}
  \rput(2.25,2.25){\rnode{x}{\psframebox{$x$\rule[-0.75mm]{0mm}{3mm}}}}
  \rput(4.25,0.25){\rnode{v}{\psframebox{$v$\rule[-0.75mm]{0mm}{3mm}}}}
  \psset{arrows=->,arrowsize=1.5mm}
  \nccurve[angleA=55,angleB=215,linestyle=dashed]{u}{x}
    \rput(0.95,1.55){\small$r_{ux}$}
  \nccurve[angleA=35,angleB=235]{u}{x}
    \rput(1.55,0.95){\small$g_{ux}$}
  \nccurve[angleA=10,angleB=170,linestyle=dashed]{u}{v}
    \rput(2.25,0.65){\small$r_{uv}$}
  \nccurve[angleA=350,angleB=190]{u}{v}
    \rput(2.25,-0.15){\small$g_{uv}$}
  \nccurve[angleA=325,angleB=125,linestyle=dashed]{x}{v}
    \rput(3.55,1.55){\small$r_{xv}$}
  \nccurve[angleA=305,angleB=145]{x}{v}
    \rput(2.95,0.95){\small$g_{xv}$}
\end{pspicture}
\end{center}

\noindent
This graph has six paths from $u$ to $v$ with the following weights:
\begin{center}
  $\left( {r_{ux}\!+\!r_{xv}}, {0} \right)$,
  $\left( {r_{ux}}, {g_{xv}} \right)$,
  $\left( {r_{xv}}, {g_{ux}} \right)$,
  $\left( {0}, {g_{ux}\!+\!g_{xv}} \right)$,
  $\left( {r_{uv}}, {0} \right)$,
  $\left( {0}, {g_{uv}} \right)$.
\end{center}

\noindent
Certainly the path weights
$\left( {r_{ux}\!+\!r_{xv}}, {0} \right)$
and $\left( {r_{uv}}, {0} \right)$
must be comparable under the product partial order on $\mathbb{R}^2$, as are
$\left( {0}, {g_{ux}\!+\!g_{xv}} \right)$
and $\left( {0}, {g_{uv}} \right)$.
Depending on the six edge weights there can be either two, three, or four
minimal paths from $u$ to $v$ with distinct weights.

Similarly, for the following graph with four vertices, fourteen edges
and two possible colours:

\begin{center}
\begin{pspicture}(0,-2)(4.5,2.5)
  \rput(0.25,0.25){\circlenode{u}{{$u$}}}
  \rput(2.25,2.25){\circlenode{x}{{$x$}}}
  \rput(2.25,-1.75){\circlenode{y}{$y$}}
  \rput(4.25,0.25){\circlenode{v}{$v$}}
  \psset{arrows=->,arrowsize=1.5mm}
  \nccurve[angleA=55,angleB=215,linestyle=dashed]{u}{x}
  \nccurve[angleA=35,angleB=235]{u}{x}
  \nccurve[angleA=10,angleB=170,linestyle=dashed]{u}{v}
  \nccurve[angleA=350,angleB=190]{u}{v}
  \nccurve[angleA=325,angleB=125,linestyle=dashed]{u}{y}
  \nccurve[angleA=305,angleB=145]{u}{y}
  \nccurve[angleA=325,angleB=125,linestyle=dashed]{x}{v}
  \nccurve[angleA=305,angleB=145]{x}{v}
  \nccurve[angleA=55,angleB=215,linestyle=dashed]{y}{v}
  \nccurve[angleA=35,angleB=235]{y}{v}
  \nccurve[angleA=295,angleB=65,linestyle=dashed]{x}{y}
  \nccurve[angleA=280,angleB=80]{x}{y}
  \nccurve[angleA=115,angleB=245,linestyle=dashed]{y}{x}
  \nccurve[angleA=100,angleB=260]{y}{x}
\end{pspicture}
\end{center}

\noindent
there are $26$ paths to consider from $u$ to $v$.
After a surprising amount of effort it can be seen that there are between
two and a maximum of eight minimal path weights possible.

As an important special case, a weighted coloured--edge graph
is called a \defn{chain} if in some enumeration
of its vertices $v_1, v_2, v_3, \dots, v_{n-1}, v_n$ the graph only has edges
from a vertex $v_x$ to the next vertex $v_{x+1}$ in the enumeration.

\begin{pspicture}(0,-1)(12,1)
  \rput(0.5,0){\circlenode{v1}{\makebox[4mm]{$v_1$}}}
  \rput(2.7,0){\circlenode{v2}{\makebox[4mm]{$v_2$}}}
  \rput(4.9,0){\circlenode{v3}{\makebox[4mm]{$v_3$}}}
  \rput(7.1,0){\circlenode{v4}{\makebox[4mm]{$v_4$}}}
  \rput(9.3,0){\circlenode{vnm1}{\makebox[4mm]{$v_{n\!-\!1}$}}}
  \rput(11.5,0){\circlenode{vn}{\makebox[4mm]{$v_n$}}}

  \psset{arrows=->,arrowsize=1.5mm}
  \nccurve[angleA=40,angleB=140,linestyle=dashed]{v1}{v2}\rput(1.6,0.65){$r_1$}
  \ncline{v1}{v2}\rput(1.6,0.15){$g_1$}
  \nccurve[angleA=320,angleB=220,linestyle=dotted,linewidth=2pt]{v1}{v2}\rput(1.6,-0.35){$b_1$}

  \nccurve[angleA=40,angleB=140,linestyle=dashed]{v2}{v3}\rput(3.8,0.65){$r_2$}
  \ncline{v2}{v3}\rput(3.8,0.15){$g_2$}
  \nccurve[angleA=320,angleB=220,linestyle=dotted,linewidth=2pt]{v2}{v3}\rput(3.8,-0.35){$b_2$}

  \nccurve[angleA=40,angleB=140,linestyle=dashed]{v3}{v4}\rput(6,0.65){$r_3$}
  \ncline{v3}{v4}\rput(6,0.15){$g_3$}
  \nccurve[angleA=320,angleB=220,linestyle=dotted,linewidth=2pt]{v3}{v4}\rput(6,-0.35){$b_3$}

  \rput(8.2,0){\dots}

  \nccurve[angleA=40,angleB=140,linestyle=dashed]{vnm1}{vn}\rput(10.45,0.65){$r_{n-1}$}
  \ncline{vnm1}{vn}\rput(10.45,0.15){$g_{n-1}$}
  \nccurve[angleA=320,angleB=220,linestyle=dotted,linewidth=2pt]{vnm1}{vn}\rput(10.45,-0.35){$b_{n-1}$}
\end{pspicture}

\noindent
Clearly, a chain can have up to a maximum of $k^{n-1}$ paths from $v_1$ to $v_n$.
A simple induction argument can be used to show that in the following chain
all $k^{n-1}$ paths have minimal weight:

\begin{pspicture}(0,-1)(12,1)
  \rput(0.5,0){\circlenode{v1}{\makebox[4mm]{$v_1$}}}
  \rput(2.7,0){\circlenode{v2}{\makebox[4mm]{$v_2$}}}
  \rput(4.9,0){\circlenode{v3}{\makebox[4mm]{$v_3$}}}
  \rput(7.1,0){\circlenode{v4}{\makebox[4mm]{$v_4$}}}
  \rput(9.3,0){\circlenode{vnm1}{\makebox[4mm]{$v_{n\!-\!1}$}}}
  \rput(11.5,0){\circlenode{vn}{\makebox[4mm]{$v_n$}}}

  \psset{arrows=->,arrowsize=1.5mm}
  \nccurve[angleA=40,angleB=140,linestyle=dashed]{v1}{v2}
  \ncline{v1}{v2}
  \nccurve[angleA=320,angleB=220,linestyle=dotted,linewidth=2pt]{v1}{v2}

  \nccurve[angleA=40,angleB=140,linestyle=dashed]{v2}{v3}
  \ncline{v2}{v3}
  \nccurve[angleA=320,angleB=220,linestyle=dotted,linewidth=2pt]{v2}{v3}

  \nccurve[angleA=40,angleB=140,linestyle=dashed]{v3}{v4}
  \ncline{v3}{v4}
  \nccurve[angleA=320,angleB=220,linestyle=dotted,linewidth=2pt]{v3}{v4}

  \rput(8.2,0){\dots}

  \nccurve[angleA=40,angleB=140,linestyle=dashed]{vnm1}{vn}
  \ncline{vnm1}{vn}
  \nccurve[angleA=320,angleB=220,linestyle=dotted,linewidth=2pt]{vnm1}{vn}

  \rput[b](1.6,0.55){$1$}\rput[b](1.6,0.05){$1$}\rput[b](1.6,-0.45){$1$}
  \rput[b](3.8,0.55){$2$}\rput[b](3.8,0.05){$2$}\rput[b](3.8,-0.45){$2$}
  \rput[b](6.0,0.55){$4$}\rput[b](6.0,0.05){$4$}\rput[b](6.0,-0.45){$4$}
  \rput[b](10.45,0.55){$2^{n-2}$}\rput[b](10.45,0.05){$2^{n-2}$}
    \rput[b](10.45,-0.45){$2^{n-2}$}
\end{pspicture}

\noindent
Adding more \defn{forward} edges to the chain from $x$ to $y$ for $y>x+1$ increases
the number of paths in the graph but it cannot increase the number in a minimal
set of incomparable paths, and might possibly decrease the number.
But adding \defn{backward} edges from $x$ to $y$ for $y<x$ appears
to greatly complicate the situation.
However, it is shown in this paper that essentially chains illustrate
the worst possible situation, whereas more general weighted coloured--edge graphs
may have a factorial number of paths a minimal set of incomparable paths
can only have cardinality up to $k^{n-1}$ (hence in a multimodal network this
is a tight bound on the Pareto set of minimal path weights that might need to
be considered when applying application-specific constraints).
It is interesting to note that an equivalent result for general multiweighted graphs
(even restricting attention to only consider unigraphs) does not hold.

This paper also addresses the determination of a bound on the expected number
of minimal paths when the edge weights are random variables
in a weighted bicoloured--edge graph ($k=2$).
It provides an $O(n^3)$ probabilistic bound
on the expected number of minimal paths for bicoloured--edge graphs whose
weights are drawn from a bounded probability density function.

\section{Upper Bound on Minimal Paths}\label{Sec:UpperBound}

To prove that $k^{n-1}$ is a bound on the cardinality a special class of
weighted coloured--edge graph is first introduced.

\begin{definition}
A weighted coloured--edge graph $\gra{G}=\langle V, E, \omega, \lambda \rangle$
is called \defn{canonical} if:
\begin{itemize}
  \item $\gra{G}$ is \defn{complete} in each colour,
namely for all vertices $x\neq y$ and colour $c$
there is exactly one edge $e_{xy}$ from $x$ to $y$ with
$\lambda\left(e_{xy}\right)=c$,
  \item $\gra{G}$ satisfies the \defn{triangle inequality} in each colour,
for all distinct vertices $x$, $y$, $z$ and colour $c$,
the triangle formed by the three edges $e_{xy}$, $e_{yz}$, $e_{xz}$ with
$\lambda\left(e_{xy}\right)\!=\!\lambda\left(e_{yz}\right)\!%
=\!\lambda\left(e_{xz}\right)=c$
obeys
$\omega\left(e_{xz}\right)\leq\omega\left(e_{xy}\right)+\omega\left(e_{yz}\right)$.
\end{itemize}
\end{definition}

It is not difficult to verify in a canonical graph that every edge
$e_{xy}$ gives a minimal path from $x$ to $y$.
The following lemma shows that it will be sufficient to establish the bound on
the class of canonical graphs.

\begin{lemma}\label{Lem:CanonicalRepresentation}
Given any finite weighted coloured--edge graph
$\gra{G}=\langle V, E, \omega, \lambda \rangle$ there
is a canonical graph $\gra{G}^*=\langle V, E^*, \omega^*, \lambda^* \rangle$
with the same vertices and colours as $\gra{G}$,
where $E\subseteq E^*$, $\left.\lambda^*\right|_E=\lambda$,
and for which every minimal path in $\gra{G}$ is also minimal in $\gra{G}^*$.
\end{lemma}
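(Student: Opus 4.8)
The plan is to build $\gra{G}^*$ as the shortest--monochromatic--path completion of $\gra{G}$ in each colour. First I would, for each colour $c\in M$, let $d_c(x,y)$ denote the least weight $\omega_c(R)$ of a path $R$ from $x$ to $y$ in $\gra{G}$ using only edges of colour $c$ (so $\omega_{c'}(R)=0$ for $c'\neq c$), setting $d_c(x,y)=\infty$ when no such path exists. I would fix a constant $W>\sum_{e\in E}\omega(e)$, which exceeds the weight of any simple path and hence every finite $d_c(x,y)$. Then I would define $\gra{G}^*$ on the same $V$ and $M$ by placing, for each ordered pair of distinct vertices $x,y$ and each colour $c$, exactly one edge $e^c_{xy}$ of colour $c$ with $\omega^*(e^c_{xy})=d_c(x,y)$ when this is finite and $\omega^*(e^c_{xy})=W$ otherwise. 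Since $\gra{G}$ has at most one colour-$c$ edge between any ordered pair, each original edge is identified with the corresponding $e^c_{xy}$, giving $E\subseteq E^*$ and $\left.\lambda^*\right|_E=\lambda$, with $\gra{G}^*$ complete in each colour by construction. Note that the weight of an original edge may strictly decrease to the shorter distance through other vertices, which is exactly why the lemma asks only that $\lambda$ be preserved.

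Next I would verify that $\gra{G}^*$ is canonical. Completeness holds by construction, so only the triangle inequality in each colour $c$ for distinct $x,y,z$ remains. If $d_c(x,y)$ and $d_c(y,z)$ are both finite, concatenating the two witnessing monochromatic paths yields a colour-$c$ path from $x$ to $z$, so $d_c(x,z)\leq d_c(x,y)+d_c(y,z)$ and therefore $\omega^*(e^c_{xz})\leq\omega^*(e^c_{xy})+\omega^*(e^c_{yz})$. Otherwise at least one summand on the right equals $W$, so the right-hand side exceeds $W\geq\omega^*(e^c_{xz})$; the inequality holds in every case.

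The heart of the argument is a weight--preserving correspondence between the two graphs. Each finite-weight edge $e^c_{xy}$ of $\gra{G}^*$ is realised by a colour-$c$ path in $\gra{G}$ of weight exactly $\omega^*(e^c_{xy})$ in colour $c$ and $0$ in every other colour. So, replacing each edge of any $\gra{G}^*$-path $Q^*$ that uses only finite edges by its witnessing monochromatic path produces a walk $Q$ in $\gra{G}$ with $\omega_c(Q)=\omega^*_c(Q^*)$ for every $c$; deleting cycles (which cannot raise the weight, as all weights are positive) turns $Q$ into a simple path of $\gra{G}$ with weight no larger than $Q^*$ in every colour. Now I would take a minimal path $P$ of $\gra{G}$ and let $P^*$ be the same edge sequence read in $\gra{G}^*$. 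Since weights only decreased, $\omega^*_c(P^*)\leq\omega_c(P)\leq\sum_{e\in E}\omega(e)<W$ for all $c$. If some $Q^*$ strictly dominated $P^*$ in $\gra{G}^*$, then $\omega^*_c(Q^*)\leq\omega^*_c(P^*)<W$ would force $Q^*$ to avoid all $W$-edges, so the correspondence yields a path $Q$ in $\gra{G}$ with $\omega_c(Q)\leq\omega^*_c(Q^*)\leq\omega_c(P)$ for all $c$ and strict inequality in the colour where $Q^*$ beats $P^*$, contradicting the minimality of $P$. Hence $P^*$ is minimal in $\gra{G}^*$.

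The hard part will be the two-way passage between $\gra{G}^*$ and $\gra{G}$: showing that a dominating path in the completed graph cannot exploit the artificially added edges to undercut an original minimal path. This is precisely what the weight-preserving expansion secures, with the large constant $W$ quarantining the colour-unreachable pairs so that any path competitive with a minimal one lives entirely among edges that faithfully expand back into $\gra{G}$.
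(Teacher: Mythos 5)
Your construction is essentially the paper's: complete the graph in each colour using a large weight for colour--unreachable pairs, then set each edge weight to the shortest monochromatic distance, which yields the triangle inequality. Your version is correct and in fact more careful than the paper's two--line argument, since you explicitly verify (via the expand--then--delete--cycles correspondence and the quarantining constant $W$) that no path in $\gra{G}^*$ can strictly dominate the image of a minimal path of $\gra{G}$, a step the paper leaves implicit.
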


\begin{proof}
Firstly note that $\gra{G}$ can be made complete by adding edges $e_{xy}$ with
weight $n\cdot w$ where $n=\left|V\right|$ and $w$ is the maximum weight of
any edge in $\gra{G}$.
The added edges won't affect any existing minimal paths as those paths contain
at most $n-1$ edges so their path weight in each colour is less than $n\cdot w$.
It might however introduce additional minimal paths in the graph
if there were no existing path from $x$ to $y$ in some colour.
Take $E^*$ to be the resulting set of edges.

Next, the graph can have its weights altered by defining
$\omega^*\left(e_{xy}\right)$ to be the weight of the shortest path from $x$ to $y$
that only uses edges with colour $\lambda^*\left(e_{xy}\right)$.
Thanks to completeness $\omega^*$ is well-defined and clearly the resulting graph
satisfies the triangle inequality.
\end{proof}

\begin{theorem}\label{Thm:IncomparableMinimalPaths}
Suppose $\gra{G}$ is a weighted coloured--edge graph with $n\geq2$ vertices and $k$ colours.
Then a set of incomparable minimal paths in $\gra{G}$ from one vertex to another
can have at most $k^{n-1}$ paths.
\end{theorem}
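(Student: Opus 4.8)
The plan is to establish the bound only for \emph{canonical} graphs: Lemma~\ref{Lem:CanonicalRepresentation} preserves minimality of paths while keeping $n$ and $k$ fixed, so any bound on the size of a set of pairwise incomparable minimal paths (an antichain) in a canonical graph transfers back to an arbitrary $\gra{G}$. Working inside a canonical graph, I would first record two normalisations. Since all weights are positive, deleting the cycle created whenever a path revisits a vertex strictly decreases the weight in some colour while increasing it in none; hence every minimal path is simple and uses at most $n-1$ edges. Since the triangle inequality holds in each colour, two consecutive edges of a common colour $c$ may be replaced by the single colour-$c$ edge between their endpoints without raising any coordinate of the weight vector; iterating this shows that every minimal weight is attained by a \emph{reduced} path, namely a simple path in which no two consecutive edges share a colour.

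Next I would argue by induction on $n$. For $n=2$ the only simple paths are the $k$ single edges from $u$ to $v$, giving at most $k=k^{\,n-1}$ incomparable weights. For the inductive step, fix the destination $v$ and let $A$ be a maximal antichain of minimal reduced paths from $u$ to $v$. I would partition $A=\bigsqcup_{c\in M}A_c$ according to the colour $c$ of the final edge entering $v$. The goal is to prove $|A_c|\le k^{\,n-2}$ for every colour $c$; summing over the $k$ colours then yields $|A|\le k\cdot k^{\,n-2}=k^{\,n-1}$, as required.

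To bound a single class $A_c$, I would delete $v$ to obtain the canonical graph $\gra{G}'=\langle V\setminus\{v\},\dots\rangle$ on $n-1$ vertices and send each $p\in A_c$ to its prefix $\hat p$, the reduced path from $u$ to the penultimate vertex $w(p)$ obtained by discarding the last edge. Each $\hat p$ is minimal in $\gra{G}'$: if some path of $\gra{G}'$ dominated $\hat p$, then re-appending the colour-$c$ edge to $v$ (a fixed shift of the weight in coordinate $c$) would dominate $p$, contradicting its minimality. The plan is then to realise the family $\{\hat p\}$ as an antichain of minimal paths to a \emph{single} destination in a canonical graph on $n-1$ vertices, so that the induction hypothesis bounds it by $k^{(n-1)-1}=k^{\,n-2}$.

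The hard part will be exactly this last reduction, because the prefixes $\hat p$ terminate at different penultimate vertices $w(p)$; bounding the antichain one penultimate vertex at a time would ignore the incomparabilities between prefixes ending at different vertices and lose a spurious factor of $n-1$. The device I would use is to encode the common final colour as a terminal potential $\phi(w)=\omega\!\left(e^{\,c}_{wv}\right)$ on the vertices of $\gra{G}'$; the triangle inequality in colour $c$ makes $\phi$ one-Lipschitz along colour-$c$ edges, so that measuring each prefix weight with $\phi$ added in coordinate $c$ should behave like the weight of a path to one effective destination. Showing rigorously that this potential-shifted family is genuinely an antichain of minimal paths to a single destination on $n-1$ vertices---so that the induction hypothesis applies and returns the clean bound $k^{\,n-2}$ rather than a count inflated by the number of admissible penultimate vertices---is the crux of the argument and the step I expect to demand the most care.
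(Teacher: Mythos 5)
Your setup---reduce to canonical graphs via Lemma~\ref{Lem:CanonicalRepresentation}, restrict to simple paths with no two consecutive edges of the same colour, induct on $n$, and split the antichain into $k$ classes by the colour of one distinguished edge---matches the paper's strategy, but the step you yourself flag as the crux is genuinely missing, and it is exactly where the content of the theorem lies. After partitioning by the colour $c$ of the last edge, your prefixes $\hat p$ terminate at up to $n-1$ distinct penultimate vertices, while the induction hypothesis only bounds antichains of minimal paths between a \emph{fixed} pair of vertices. The terminal potential $\phi(w)=\omega(e^{c}_{wv})$ does not convert the prefix family into such an object: absorbing a potential into edge weights requires it to telescope along \emph{every} edge of every prefix, but $\phi$ lives only in coordinate $c$, and in a reduced path the edge entering $w$ is never of colour $c$ (its successor $e^{c}_{wv}$ is). So there is no reweighted canonical graph on $n-1$ vertices in which your augmented weights are path weights to a single destination, and the honest bound your reduction yields is $(n-1)\cdot k^{\,n-2}$ per colour class---one application of the hypothesis per penultimate vertex---which is too weak.

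The paper closes this gap with a different device, applied at the source rather than the destination. Fix the colour $c$ of the \emph{first} edge and order the remaining vertices $v_1,\dots,v_{n-1}$ so that $\omega(e_{uv_i})\le\omega(e_{uv_j})$ for $i<j$, where $e_{uv_i}$ is the colour-$c$ edge from $u$. Minimality forces a shortcut inequality: a minimal path beginning with $e_{uv_i}$ that later visits $y$ must satisfy $\omega(e_{uv_i})<\omega(e_{uy})$, since otherwise replacing its initial segment by the single edge $e_{uy}$ would produce a dominating path. Hence a minimal path starting with $e_{uv_i}$ is confined to $\{v_i,\dots,v_{n-1},v\}$; the subproblems are \emph{nested}, giving the geometric sum $f_c(n+1)\le\sum_{i=1}^{n-1}(k-1)k^{\,n-i-1}+1=k^{\,n-1}$ instead of a spurious factor of $n-1$. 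You could transplant this to your last-edge decomposition by ordering the penultimate vertices by $\omega(e^{c}_{wv})$ and arguing that a minimal path ending with $e^{c}_{w_iv}$ cannot pass through $w_j$ for $j<i$ (truncate at $w_j$ and exit directly), but some such ordering-and-nesting argument must replace the potential device for the proof to go through.
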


\begin{proof}
The proof uses a counting argument and induction to bound the cardinality $f_c(n)$
of a set of incomparable minimal paths whose first edge has colour $c$ in
any weighted coloured--edge graph with $n$ vertices.
Trivially, $f_c(2)=1$ for any graph $\gra{G}$ with only two vertices.

For the inductive step presume that $f_c(m)\leq k^{m-2}$ in any graph
with $m\leq n$ vertices and suppose $\gra{G}=\langle V, E, \omega, \lambda \rangle$
has $n+1$ vertices.
By Lemma \ref{Lem:CanonicalRepresentation} $\gra{G}$
can be presumed to be canonical.
Let $u$ and $v$ be any two distinct vertices of $\gra{G}$ and $S$ be a set of
incomparable minimal paths from $u$ to $v$.
By the triangle inequality it can be presumed that no minimal path in $S$
has two consecutive edges of the same colour.
Furthermore, a useful observation for a minimal path that starts with an edge
$e_{ux}$ of colour $\lambda\left(e_{ux}\right)=c$ and which passes through
some vertex $y$ before reaching $v$ is that for the edge $e_{uy}$ with
$\lambda\left(e_{uy}\right)=c$ minimality of the path ensures that
$\omega\left(e_{ux}\right)<\omega\left(e_{uy}\right)$.
To show that $f_c(n+1)\leq k^{n-1}$ for each colour $c$
order the remaining vertices of $V$, $v_1, v_2, \dots, v_{n-1}$
so that if $i<j$ then $\omega\left(e_{u v_i}\right)\leq\omega\left(e_{u v_j}\right)$
where $e_{u v_i}$ and $e_{u v_j}$ are the edges of colour $c$ from $u$ to
$v_i$ and $v_j$ respectively.
By the earlier observation, no minimal path that starts with the edge
$e_{u v_i}$ of colour $c$ can pass through any of the vertices $v_j$ for $j<i$.
Hence, any minimal path that starts with the edge $e_{u v_{n-1}}$ has only a choice
of $k-1$ edges to reach $v$ (since its consecutive edges are not of the same colour),
so there are at most $k-1$ such paths.
Similarly, any minimal path that starts with the edge $e_{u v_i}$ can only
utilize $v_i, v_{i+1}, \dots, v_{n-1}$ and $v$, so by the inductive hypothesis for $m=n-i+1$
there are at most $\Sigma_{c^\prime\neq c}f_{c^\prime}(n-i+1)\leq(k-1)k^{n-i-1}$.
Summing across all the edges $e_{u v_1}, e_{u v_2}, \dots, e_{u v_{n-1}}$ and $e_{uv}$
gives
$f_c(n+1)\leq (k-1)k^{n-2} + (k-1)k^{n-3} + \cdots + (k-1) + 1 = k^{n-1}$.
Since there are $k$ possible colours in which to start a path this completes the proof.
\end{proof}

Note that as the proof relies on being able to linearly order the vertices
$v_1, v_2, \dots, v_{n-1}$
based on the edge weights $\omega\left(e_{u v_i}\right)$ in a specific colour $c$,
the proof can not be readily adapted to arbitrary multiweighted graphs.
This result gives a tight upper bound on the cardinality of a set of incomparable
minimal paths in a weighted coloured--edge graph.
The bound is suitable for applications that are primarily interested in
determining an optimal path given criteria that depend on the total weight in
each mode of transportation.
However, the proof can be sightly modified to provide a bound on the total number of
minimal paths in the graph from $u$ to $v$, counting all minimal paths that are
comparable with each other (having the same path weight).

\begin{theorem}\label{Thm:TotalMinimalPaths}
Suppose $\gra{G}$ is a weighted coloured--edge graph with $n\geq2$ vertices and $k$ colours
for which there is only at most one edge of each colour between vertices.
Then $\gra{G}$ has at most $k(k+1)^{n-2}$ minimal paths from one vertex to another.
\end{theorem}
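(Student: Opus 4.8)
The plan is to mirror the induction behind Theorem \ref{Thm:IncomparableMinimalPaths}, but to track the full count rather than one representative per weight class. Concretely, I would let $g_c(n)$ denote the largest possible number of minimal paths from one vertex $u$ to another vertex $v$ whose first edge has colour $c$, taken over all weighted coloured--edge graphs on $n$ vertices having at most one edge of each colour between any ordered pair, and aim to show $g_c(n)\le(k+1)^{n-2}$. Summing over the $k$ possible starting colours then yields the claimed bound $k(k+1)^{n-2}$. The base case $g_c(2)=1$ is immediate, since with only two vertices the sole path starting with colour $c$ is the single edge $e_{uv}$.

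For the inductive step I would first invoke Lemma \ref{Lem:CanonicalRepresentation}: since every minimal path of $\gra{G}$ remains a distinct minimal path of its canonical representative $\gra{G}^*$, it suffices to bound the count for canonical graphs. Working in the canonical graph on $n+1$ vertices, I would note that minimal paths are necessarily simple (any repeated vertex encloses a cycle of strictly positive weight that could be excised) and then reuse the key observation verbatim: a minimal path whose first edge is the colour-$c$ edge $e_{uv_i}$ satisfies $\omega(e_{uv_i})<\omega(e_{uv_j})$ for every other vertex $v_j$ it visits. Ordering the remaining vertices $v_1,\dots,v_{n-1}$ so that $\omega(e_{uv_i})$ is nondecreasing, this forces any minimal path beginning with $e_{uv_i}$ to stay within the induced subgraph on $\{v_i,v_{i+1},\dots,v_{n-1},v\}$, which is again canonical and has $n-i+1$ vertices.

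The decisive difference from the incomparable case is how the recursion is closed. There one may assume no two consecutive edges share a colour, so the continuation from $v_i$ offers only $k-1$ colour choices; here, by contrast, a path that repeats colour $c$ can still be minimal whenever the triangle inequality holds with equality, so such paths must be counted too. Thus I would argue that the tail of a minimal path, after deleting $e_{uv_i}$, is itself a minimal $v_i$--$v$ path in the induced subgraph (otherwise substituting a strictly lighter tail would dominate the original), and the number of these is bounded by $\sum_{c'}g_{c'}(n-i+1)\le k(k+1)^{n-i-1}$, now summing over all $k$ colours for the first tail edge. Adding the single direct path $e_{uv}$ and summing over $i=1,\dots,n-1$ gives $g_c(n+1)\le 1+\sum_{i=1}^{n-1}k(k+1)^{n-i-1}=(k+1)^{n-1}$ by a telescoping geometric sum, completing the induction.

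I expect the main obstacle to be justifying that same-colour continuations are exactly what inflates the base of the exponential from $k$ to $k+1$, and making the bookkeeping airtight: specifically, confirming that the observation $\omega(e_{uv_i})<\omega(e_{uv_j})$ still holds when consecutive same-colour edges are permitted (it does, since any repeated-colour initial segment has strictly larger colour-$c$ weight than its triangle-inequality shortcut), and that restricting each tail to a minimal path of the induced subgraph neither discards a genuine minimal path nor double counts. Once these points are secured, the arithmetic telescopes exactly as in Theorem \ref{Thm:IncomparableMinimalPaths}, with the summand $(k-1)k^{n-i-1}$ replaced throughout by $k(k+1)^{n-i-1}$.
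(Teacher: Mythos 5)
Your proposal is correct and follows exactly the route the paper intends: its proof of Theorem~\ref{Thm:TotalMinimalPaths} is just the remark ``similar to Theorem~\ref{Thm:IncomparableMinimalPaths} except that the counting argument bounds $g_c(m)\leq(k+1)^{m-2}$ and paths are allowed to have the same colour on two consecutive edges,'' which is precisely the induction you carry out, with the per-step factor $k-1$ replaced by $k$ and the geometric sum telescoping to $(k+1)^{n-1}$. Your supporting checks (minimal paths are simple, the ordering observation survives without the no-repeated-colour assumption, tails are minimal in the induced subgraph) are the right ones and are sound.
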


\begin{proof}
Similar to Theorem \ref{Thm:IncomparableMinimalPaths} except that the counting argument
bounds $g_c(m)\leq (k+1)^{m-2}$ and paths are allowed to have the same colour
on two consecutive edges.
\end{proof}

The bound established in Theorem \ref{Thm:TotalMinimalPaths} can be seen to be
tight by constructing examples based on the chain example in the previous section
but with additional edges, such as the following example for $n=4$ in which
each of the $3\times4^2$ paths from $v_1$ to $v_4$ is minimal:

\begin{pspicture}(0,-2)(12,2.5)
  \rput(0.5,0){\circlenode{v1}{\makebox[4mm]{$v_1$}}}
  \rput(4,0){\circlenode{v2}{\makebox[4mm]{$v_2$}}}
  \rput(7.5,0){\circlenode{v3}{\makebox[4mm]{$v_3$}}}
  \rput(11,0){\circlenode{v4}{\makebox[4mm]{$v_4$}}}

  \nccurve[angleA=30,angleB=150,arrows=->,linestyle=dashed]{v1}{v2}
  \ncline[arrows=->]{v1}{v2}
  \nccurve[angleA=330,angleB=210,arrows=->,linestyle=dotted,linewidth=2pt]{v1}{v2}
  \rput[b](2.25,0.6){$1$}\rput[b](2.25,0.05){$1$}\rput[b](2.25,-0.5){$1$}

  \nccurve[angleA=30,angleB=150,arrows=->,linestyle=dashed]{v2}{v3}
  \ncline[arrows=->]{v2}{v3}
  \nccurve[angleA=330,angleB=210,arrows=->,linestyle=dotted,linewidth=2pt]{v2}{v3}
  \rput[b](5.75,0.6){$2$}\rput[b](5.75,0.05){$2$}\rput[b](5.75,-0.5){$2$}

  \nccurve[angleA=30,angleB=150,arrows=->,linestyle=dashed]{v3}{v4}
  \ncline[arrows=->]{v3}{v4}
  \nccurve[angleA=330,angleB=210,arrows=->,linestyle=dotted,linewidth=2pt]{v3}{v4}
  \rput[b](9.25,0.6){$4$}\rput[b](9.25,0.05){$4$}\rput[b](9.25,-0.5){$4$}

  \nccurve[angleA=35,angleB=145,arrows=->,linestyle=dashed]{v1}{v3}
  \nccurve[angleA=15,angleB=165,arrows=->]{v1}{v3}
  \nccurve[angleA=325,angleB=215,arrows=->,linestyle=dotted,linewidth=2pt]{v1}{v3}
  \rput[b](4,1.2){$3$}\rput[b](4,0.55){$3$}\rput[b](4,-1.1){$3$}

  \nccurve[angleA=35,angleB=145,arrows=->,linestyle=dashed]{v2}{v4}
  \nccurve[angleA=15,angleB=165,arrows=->]{v2}{v4}
  \nccurve[angleA=325,angleB=215,arrows=->,linestyle=dotted,linewidth=2pt]{v2}{v4}
  \rput[b](7.5,1.2){$6$}\rput[b](7.5,0.55){$6$}\rput[b](7.5,-1.1){$6$}

  \nccurve[angleA=45,angleB=135,arrows=->,linestyle=dashed]{v1}{v4}
  \nccurve[angleA=35,angleB=145,arrows=->]{v1}{v4}
  \nccurve[angleA=315,angleB=225,arrows=->,linestyle=dotted,linewidth=2pt]{v1}{v4}
  \rput[b](5.75,2.1){$7$}\rput[b](5.75,1.7){$7$}\rput[b](5.75,-1.95){$7$}
\end{pspicture}

\section{Expected Number of Minimal Paths}

This section demonstrates that the expected number of minimal paths for a
bicoloured--edge graph is polynomially bounded.
The approach is based on some ideas from R\"{o}glin \& V\"{o}cking \cite{Roglin2005}
and Beier et al. \cite{heiko2007} where a bound on the expected number of
optimal solutions is estimated for bicriteria problems.
Their work focuses on the establishment of a probabilistic bound on the number
of Pareto optimal points for bicriteria integer problems,
exploiting structural properties of the Pareto frontier
that are termed as \defn{winners} and \defn{losers}.
It is adapted here to estimate a probabilistic bound on the number of minimal paths
in weighted bicoloured--edge graphs.
However, several arguments have been modified to be applied in the context of
coloured--edge graphs.

Suppose $\gra{G}$ is a finite weighted bicoloured--edge graph with colours
$M=\{\mbox{red}, \mbox{green}\}$ for convenience and let $u$, $v$ be vertices
of $\gra{G}$ for which there is a pure coloured--edge path in colour green from $u$ to $v$.

Assume $e$ is a red edge of $\gra{G}$ whose weight is a random variable with
bounded probability density function $f_e:(0,\infty)\rightarrow[0,\phi_e]$ for some $\phi_e>0$.
Define the function $\Delta_e:[0,\infty)\rightarrow(0,\infty]$ for $r\geq 0$ by the following.
Consider the paths $p_e$ from $u$ to $v$ that do not include the edge $e$ and for which
$\omega_{\mbox{\scriptsize red}}(p_e)\le r$. Since there is a pure path in colour green from $u$ to $v$,
there are such paths $p_e$, and since $\gra{G}$ is finite there are only finitely  many
such paths. Take $p_e^{max}$ to be such a path that has least green
weight and let $g_r = \omega_{\mbox{\scriptsize green}}(p_e^{max})$. Note that $g_r$ is uniquely
defined for $r$ and does not depend in any way on the value of $\omega(e)$.
Next, consider the paths $q_e$ from $u$ to $v$ that do include the edge $e$ and for which
$\omega_{\mbox{\scriptsize green}}(q_e) < g_r$. If there is no such path then take
$\Delta_e(r)=\infty$ for convenience, otherwise let $q_e^{min}$ denote such a path that
has least red weight and take $\Delta_e(r)=\omega_{\mbox{\scriptsize red}}(q_e^{min})$.
Note that although $\omega_{\mbox{\scriptsize red}}(q_e^{min})$ depends on the value of $\omega(e)$,
the weight of this edge does not affect the relative red ordering between
the various $q_e$ (since they each include $e$). Hence the choice of $q_e^{min}$
(or another $q_e$ with same red weight) does not depend in any way
on the value of $\omega(e)$, and $s_r=\omega_{\mbox{\scriptsize red}}(q_e^{min})-\omega(e)$ (where
$s_r$ is the sum of red weights except for $e$) is uniquely determined by $r$
and does not depend on the choice of $\omega(e)$.

\begin{figure}[h]
\begin{center}
\scalebox{0.65}{
\begin{pspicture}(-0.5,-0.5)(12,12)
  \psdot[dotsize=0.3](0,10.5)
  \psdot[dotsize=0.3](1.5,8.5)
  \psdot[dotsize=0.3](2.5,6.5)
  \psdot[dotsize=0.3,dotstyle=o](2.5,9.5)
  \psdot[dotsize=0.3](3.5,7.5)
  \psdot[dotsize=0.3](4,9)
  \psdot[dotsize=0.3](4,10)
  \psdot[dotsize=0.3](4.5,6.5)
  \psdot[dotsize=0.3](5,5)\uput{1.5mm}[225](5,5){$p_e^{max}$}
  \psdot[dotsize=0.3,dotstyle=o](5,7.5)
  \psdot[dotsize=0.3,dotstyle=o](5.5,3.5)\uput{1.5mm}[225](5.5,3.5){$q_e^{min}$}
  \psdot[dotsize=0.3](5.5,6)
  \psdot[dotsize=0.3,dotstyle=o](6.5,7.5)
  \psdot[dotsize=0.3](7,10)
  \psdot[dotsize=0.3,dotstyle=o](7.5,3)
  \psdot[dotsize=0.3](7.5,4.5)
  \psdot[dotsize=0.3](7.5,6.5)
  \psdot[dotsize=0.3,dotstyle=o](8,8)
  \psdot[dotsize=0.3](8.5,5.5)
  \psdot[dotsize=0.3,dotstyle=o](9,4.5)
  \psdot[dotsize=0.3](9.5,2.5)
  \psdot[dotsize=0.3](10,8)
  \psdot[dotsize=0.3](10.5,3.5)
  \psdot[dotsize=0.3,dotstyle=o](10.5,9)
  \psdot[dotsize=0.3](11,0)

  \pcline[linewidth=0.02,arrowsize=0.3]{<->}(0,1)(5.5,1)\ncput*[framesep=2pt]{$\Delta_e(r)$}
  \pcline[linestyle=dotted,linewidth=.07](5.5,0)(5.5,3.4)
  \pcline[linestyle=dashed,linewidth=.01](6,0)(6,12)\uput{2mm}[270](6,0){$r$}
  \pcline[linestyle=dashed,linewidth=.01](0,5)(12,5)\uput{2mm}[180](0,5){$g_{r}$}

  \psline[linewidth=0.06]{<->}(0,12)(0,0)(12,0)
  \uput{5mm}[270](12,0){red}
  \uput{7mm}[180](0.5,12){green}
\end{pspicture}}
\end{center}
\caption{Representation of $\Delta_e(r)$ and associated variables.}
\label{Fig:DeltaE}
\end{figure}
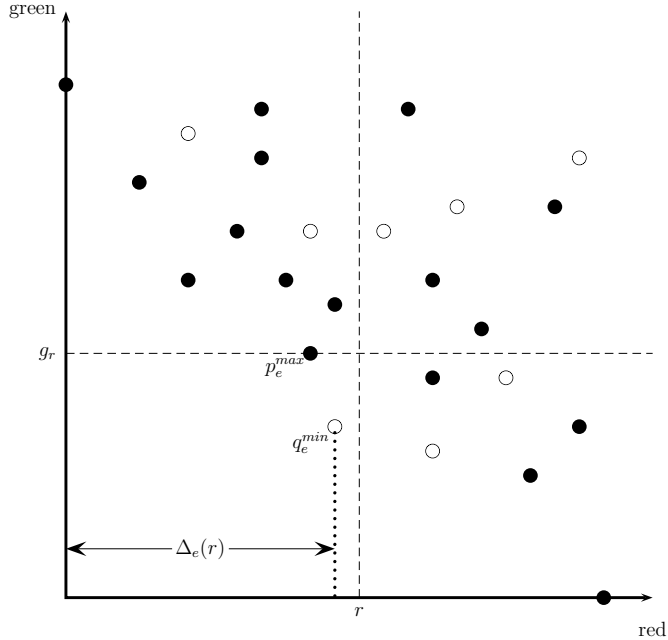

Figure \ref{Fig:DeltaE} illustrates $\Delta_e$ and its associated variables,
where black dots are used to represent paths not using edge $e$ and white dots represent
the paths containing $e$.
Note that all $q_e$ paths just shift horizontally depending on the value of $\omega(e)$.
However, they do not change their positions relative to each other.

\begin{lemma}\label{Lem:Delta1}
For any $r\geq 0$ and $\varepsilon \geq 0$, if $\Delta_e(r)<\infty$ then
\begin{displaymath}
  \mathbb{P}(r < \Delta_e(r) \leq r+\varepsilon) \leq \phi_e \cdot \varepsilon.
\end{displaymath}
\end{lemma}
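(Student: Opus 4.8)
The plan is to exploit the structural observation recorded just before the statement: for a fixed $r$ with $\Delta_e(r)<\infty$, the value $\Delta_e(r)$ depends on the random weight $\omega(e)$ only through a single additive shift. Indeed, the collection of candidate paths $q_e$ (those containing $e$ with $\omega_{\mbox{\scriptsize green}}(q_e)<g_r$) is determined by $r$ alone, since $g_r$ is fixed by $r$ and the green weight of each $q_e$ is unaffected by $\omega(e)$ because $e$ is red. Among these candidates the minimiser of red weight coincides with the minimiser of red-weight-excluding-$e$, as every candidate contains $e$ and hence has its red weight shifted by the common amount $\omega(e)$. Thus $s_r$ is a deterministic constant once $r$ is fixed, and $\Delta_e(r)=s_r+\omega(e)$.

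With this identity in hand the argument is essentially immediate. First I would note that the hypothesis $\Delta_e(r)<\infty$ is a condition on $r$ alone, since nonemptiness of the candidate set does not involve $\omega(e)$; consequently $s_r$ is a well-defined finite number and the randomness enters purely through $\omega(e)$. Substituting $\Delta_e(r)=s_r+\omega(e)$ rewrites the event as
\[
  r<\Delta_e(r)\le r+\varepsilon
  \quad\Longleftrightarrow\quad
  r-s_r<\omega(e)\le r-s_r+\varepsilon ,
\]
so the probability in question equals $\int_{r-s_r}^{\,r-s_r+\varepsilon} f_e(t)\,dt$, with the lower limit replaced by $\max\{0,\,r-s_r\}$ when $r-s_r<0$.

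Finally I would bound this integral directly. Since $f_e$ takes values in $[0,\phi_e]$ and the interval of integration has length at most $\varepsilon$, the integral is at most $\phi_e\cdot\varepsilon$, which is exactly the claimed inequality.

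I do not expect a genuine obstacle here: the entire content lies in the already-established fact that $\Delta_e(r)-\omega(e)=s_r$ is independent of the value of $\omega(e)$, which reduces the statement to a bound on the probability that a random variable with bounded density lands in an interval of length $\varepsilon$. The only point requiring a little care is the boundary case $r-s_r<0$, where the integration window partly leaves the support $(0,\infty)$ of $f_e$; this truncation can only decrease the probability and therefore does not affect the inequality.
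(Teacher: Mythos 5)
Your proof is correct and follows essentially the same route as the paper's: both rewrite the event using the identity $\Delta_e(r)=s_r+\omega(e)$ with $s_r$ deterministic given $r$, express the probability as $\int_{r-s_r}^{r-s_r+\varepsilon}f_e(x)\,dx$, and bound it by $\phi_e\cdot\varepsilon$. Your extra remark about truncating the integration window at the boundary of the support is a minor refinement the paper omits but changes nothing substantive.
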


\begin{proof}
Let $r\geq 0$, $\varepsilon > 0$ and $q_e^{min}$ be a path that includes $e$ with
$\omega_{\mbox{\scriptsize green}}(q_e^{min}) < g_r$ and $\omega_{\mbox{\scriptsize red}}(q_e^{min})$
minimal amongst such paths.
As $s_r$ does not depend on the value of $\omega(e)$ one has:
\begin{eqnarray*}
\mathbb{P}(r < \Delta_e(r) \leq r+\varepsilon)
  &=& \mathbb{P}(r < s_r + \omega(e) \leq r+\varepsilon)\\
  &=& \mathbb{P}(r - s_r < \omega(e) \leq r - s_r + \varepsilon)\\
  &=& \int_{r-s_r}^{r-s_r+\varepsilon} f_e(x)dx \\
  &\leq& \int_{r-s_r}^{r-s_r+\varepsilon} \phi_e dx = \phi_e \cdot \varepsilon.
\end{eqnarray*}
\end{proof}

Now, suppose that all the red edges $e$ of the graph $\gra{G}$ have weights that are random
variables with bounded probability density functions, and suppose that besides a pure green
path from $u$ to $v$ there is also a pure coloured--edge path in red from $u$ to $v$,
with a minimal pure coloured--edge path in red having red weight $r_{tot}$.

Define the function $\Delta:[0,r_{tot})\rightarrow(0,\infty)$ for $0\leq r < r_{tot}$
by the following. Consider the minimal paths $q$ from $u$ to $v$ for
which $\omega_{\mbox{\scriptsize red}}(q)>r$.
Since there is a pure red path with weight $r_{tot}$, there are such paths $q$, and since
$\gra{G}$ is finite there are only finitely many such paths. Take $q^{min}$ to be a minimal
path with $\omega_{\mbox{\scriptsize red}}(q^{min})>r$ that has least red weight and take
$\Delta(r)=\omega_{\mbox{\scriptsize red}}(q^{min})>r$.
Figure \ref{Fig:Delta01} illustrates $\Delta(r)$ and $q^{min}$.
\begin{figure}[h]
\begin{center}
\scalebox{0.65}{
\begin{pspicture}(-0.5,-0.5)(12,12)
  \psdot[dotsize=0.3](0,10.5)
  \psdot[dotsize=0.3](1.5,8.5)
  \psdot[dotsize=0.3](2.5,6.5)
  \psdot[dotsize=0.3](2.5,9.5)
  \psdot[dotsize=0.3](3.5,7.5)
  \psdot[dotsize=0.3](4,9)
  \psdot[dotsize=0.3](4,10)
  \psdot[dotsize=0.3](4.5,6.5)
  \psdot[dotsize=0.3](5,5)
  \psdot[dotsize=0.3](5,7.5)
  \psdot[dotsize=0.3](5.5,3.5)\uput{1.75mm}[225](5.5,3.5){$p^{max}$}
  \psdot[dotsize=0.3](5.5,6)
  \psdot[dotsize=0.3](6.5,7.5)
  \psdot[dotsize=0.3](7,10)
  \psdot[dotsize=0.3](7.5,3)\uput{1.5mm}[225](7.5,3){$q^{min}$}
  \psdot[dotsize=0.3](7.5,4.5)
  \psdot[dotsize=0.3](7.5,6.5)
  \psdot[dotsize=0.3](8,8)
  \psdot[dotsize=0.3](8.5,5.5)
  \psdot[dotsize=0.3](9,4.5)
  \psdot[dotsize=0.3](9.5,2.5)
  \psdot[dotsize=0.3](10,8)
  \psdot[dotsize=0.3](10.5,3.5)
  \psdot[dotsize=0.3](10.5,9)
  \psdot[dotsize=0.3](11,0)

  \psline[linestyle=dotted,linewidth=.05](0,10.5)(1.5,8.5)(2.5,6.5)(5,5)%
    (5.5,3.5)(7.5,3)(9.5,2.5)(11,0)
  \uput{1mm}[0](10.25,1.25){Pareto frontier}

  \pcline[linewidth=0.02,arrowsize=0.3]{<->}(0,1)(7.5,1)\ncput*[framesep=2pt]{$\Delta(r)$}
  \pcline[linestyle=dotted,linewidth=.07](7.5,0)(7.5,3)
  \pcline[linestyle=dashed,linewidth=.01](6,0)(6,12)\uput{2mm}[270](6,0){$r$}

  \psline[linewidth=0.06]{<->}(0,12)(0,0)(12,0)
  \uput{5mm}[270](12,0){red}
  \uput{7mm}[180](0.5,12){green}
\end{pspicture} }
\end{center}
\caption{Representation of $\Delta(r)$ and associated variables.}
\label{Fig:Delta01}
\end{figure}
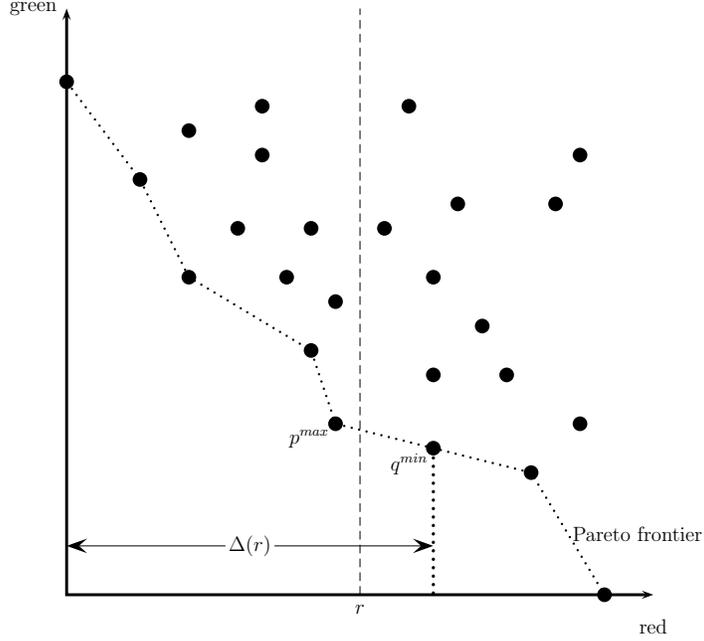

\begin{lemma}\label{Lem:Delta2}
For $0 \leq r < r_{tot}$, there exists a red edge $e$ for which $\Delta(r)=\Delta_e(r)$.
\end{lemma}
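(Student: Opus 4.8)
The plan is to produce the required edge explicitly from the minimal path that realises $\Delta(r)$, using positivity of the weights as the decisive leverage. Write $\rho=\Delta(r)$ and let $q^{min}$ be the minimal path with $\omega_{\mbox{\scriptsize red}}(q^{min})=\rho>r$ furnishing $\Delta(r)$. First I would single out the green benchmark coming from the left of the threshold: let $p^{\ast}$ be a path from $u$ to $v$ with $\omega_{\mbox{\scriptsize red}}(p^{\ast})\le r$ of least green weight, and set $g^{\ast}=\omega_{\mbox{\scriptsize green}}(p^{\ast})$. Such a path exists since the pure green path has red weight $0\le r$. I would then check that $\omega_{\mbox{\scriptsize green}}(q^{min})<g^{\ast}$: otherwise $p^{\ast}$, having strictly smaller red weight and no larger green weight, would dominate $q^{min}$, contradicting its minimality.

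The reformulation I want to establish is that $\rho$ is precisely the least red weight attainable by any path whose green weight lies below the benchmark, that is, $\rho=\min\{\omega_{\mbox{\scriptsize red}}(p):\omega_{\mbox{\scriptsize green}}(p)<g^{\ast}\}$. The path $q^{min}$ already witnesses the inequality $\le$. For $\ge$, any path $p$ with $\omega_{\mbox{\scriptsize green}}(p)<g^{\ast}$ must satisfy $\omega_{\mbox{\scriptsize red}}(p)>r$ by the defining property of $g^{\ast}$; passing (by finiteness) to a minimal path dominating $p$ keeps the green weight below $g^{\ast}$, hence keeps the red weight above $r$, so that path competes in the definition of $\Delta(r)$ and its red weight is at least $\rho$. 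This pins down $\rho$ as claimed.

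The crux is the choice of $e$. Since $\omega_{\mbox{\scriptsize red}}(p^{\ast})\le r<\rho=\omega_{\mbox{\scriptsize red}}(q^{min})$ and each red weight is a sum of \emph{positive} edge weights, $p^{\ast}$ cannot contain every red edge of $q^{min}$, for otherwise its own red weight would already be at least $\rho$. Hence there is a red edge $e$ of $q^{min}$ with $e\notin p^{\ast}$, and this is the edge I would take. I expect this pigeonhole-type step, which is exactly where positivity of the weights is used, to be the main point to locate and state cleanly; everything surrounding it is routine reasoning about minima over nested collections of paths.

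Finally I would verify $\Delta_e(r)=\rho$ for this $e$. The benchmark $g_r$ attached to $e$ is the least green weight over paths avoiding $e$ with red weight $\le r$; since $p^{\ast}$ is such a path we get $g_r\le g^{\ast}$, and since $g_r$ minimises over a sub-collection of the paths defining $g^{\ast}$ we also get $g_r\ge g^{\ast}$, so $g_r=g^{\ast}$. Now $q^{min}$ contains $e$ and has green weight below $g^{\ast}=g_r$, so it is an admissible $q_e$, giving $\Delta_e(r)\le\rho$; conversely every admissible $q_e$ has green weight $<g_r=g^{\ast}$, whence by the reformulation its red weight is at least $\rho$, giving $\Delta_e(r)\ge\rho$. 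Therefore $\Delta_e(r)=\rho=\Delta(r)$, which is the desired conclusion.
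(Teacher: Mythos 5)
Your proof is correct and follows essentially the same route as the paper: both select $e$ as a red edge of $q^{min}$ that is absent from the least-green benchmark path (forced by $\omega_{\mbox{\scriptsize red}}(p^{\ast})\le r<\omega_{\mbox{\scriptsize red}}(q^{min})$ together with positivity of the edge weights), and then verify that $g_r$ equals the benchmark green weight and that $\Delta_e(r)=\Delta(r)$. The only notable difference is presentational: by taking the benchmark over \emph{all} paths with red weight $\le r$ rather than only Pareto-minimal ones, you get $g_r=g^{\ast}$ from a clean two-sided subset comparison and replace the paper's ``no path between the adjacent Pareto points $p^{max}$ and $q^{min}$'' argument with the explicit identity $\Delta(r)=\min\{\omega_{\mbox{\scriptsize red}}(p):\omega_{\mbox{\scriptsize green}}(p)<g^{\ast}\}$.
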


\begin{proof}
Consider the minimal paths $p$ from $u$ to $v$ for which $\omega_{\mbox{\scriptsize red}}(p)\leq r$.
Since there is a pure green path from $u$ to $v$ there are such paths, and
since $\gra{G}$ is finite there are only finitely many such paths.
Take $p^{max}$ to be a minimal path with $\omega_{\mbox{\scriptsize red}}(p)\le r$
that has least green weight.
Then $p^{max}$ and $q^{min}$ are adjacent minimal paths on the Pareto frontier,
so there can be no path between $p^{max}$ and $q^{min}$ in the sense that no path
can have both its red weight less than
$\omega_{\mbox{\scriptsize red}}(q^{min})$ and its green weight less
than $\omega_{\mbox{\scriptsize green}}(p^{max})$.

Since $\omega_{\mbox{\scriptsize red}}(p^{max}) \leq r < \omega_{\mbox{\scriptsize red}}(q^{min})$
there must be
some red edge $e$ that is in $q^{min}$ but not in $p^{max}$.
As $p_e^{max}$ has the least green weight amongst paths
$p_e$ that do not include $e$ and for which $\omega_{\mbox{\scriptsize red}}(p_e)\le r$,
$\omega_{\mbox{\scriptsize green}}(p_e^{max}) \leq \omega_{\mbox{\scriptsize green}}(p^{max})$.
As
$\omega_{\mbox{\scriptsize red}}(p_e^{max}) \leq r < \omega_{\mbox{\scriptsize red}}(q^{min})$,
and there are no paths between $p^{max}$ and $q^{min}$ it follows that
$\omega_{\mbox{\scriptsize green}}(p_e^{max})=\omega_{\mbox{\scriptsize green}}(p^{max})$.
Hence $g_r = \omega_{\mbox{\scriptsize green}}(p^{max})$.

As $p^{max}$ and $q^{min}$ are incomparable
$\omega_{\mbox{\scriptsize green}}(q^{min})<\omega_{\mbox{\scriptsize green}}(p^{max})=g_r$.
Next, as $q_e^{min}$ has the least red weight amongst paths $q_e$
that do include $e$ and for which $\omega_{\mbox{\scriptsize green}}(q_e)<g_r$,
$\omega_{\mbox{\scriptsize red}}(q_e^{min}) \le \omega_{\mbox{\scriptsize red}}(q^{min})$.
But since
$\omega_{\mbox{\scriptsize green}}(q_e^{min}) < g_r = \omega_{\mbox{\scriptsize green}}(p^{max})$
and there are no paths between $p^{max}$ and $q^{min}$, it follows
that $\omega_{\mbox{\scriptsize red}}(q_e^{min})=\omega_{\mbox{\scriptsize red}}(q^{min})$.
Hence one has
$\Delta(r)=\omega_{\mbox{\scriptsize red}}(q^{min})=\omega_{\mbox{\scriptsize red}}(q_e^{min})=\Delta_e(r)$.
\end{proof}

\begin{corollary}\label{Cor:Delta}
For any $0 \le r < r_{tot}$ and $\varepsilon > 0$
\begin{displaymath}
  \mathbb{P}(\Delta(r) \le r + \varepsilon)
  \le \left(\sum_{\mbox{\scriptsize red edge $e$}} \phi_e \right) \cdot \varepsilon.
\end{displaymath}
\end{corollary}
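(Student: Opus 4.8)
The plan is to derive the corollary from the two preceding lemmas by way of a union bound over the red edges. First I would fix $r\in[0,r_{tot})$ and $\varepsilon>0$ and note that, directly from its definition, $\Delta(r)=\omega_{\mbox{\scriptsize red}}(q^{min})>r$ whenever it is finite; hence the event $\{\Delta(r)\le r+\varepsilon\}$ is identical to $\{r<\Delta(r)\le r+\varepsilon\}$. Lemma~\ref{Lem:Delta2} then guarantees that, for every realization of the random edge weights, there is some red edge $e$ with $\Delta(r)=\Delta_e(r)$. Consequently this event is contained in the union, taken over all red edges $e$, of the events $\{r<\Delta_e(r)\le r+\varepsilon\}$.

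The second step is to apply subadditivity of probability to this containment, obtaining
\[
  \mathbb{P}(\Delta(r)\le r+\varepsilon)
  \le \sum_{\mbox{\scriptsize red edge $e$}}\mathbb{P}(r<\Delta_e(r)\le r+\varepsilon),
\]
and then to bound each summand by $\phi_e\cdot\varepsilon$ using Lemma~\ref{Lem:Delta1}. Summing gives precisely $\left(\sum_{\mbox{\scriptsize red edge $e$}}\phi_e\right)\cdot\varepsilon$, which is the asserted estimate. The summands for which $\Delta_e(r)=\infty$ are harmless, since there the event $\{r<\Delta_e(r)\le r+\varepsilon\}$ is empty and contributes nothing, in agreement with the hypothesis $\Delta_e(r)<\infty$ of Lemma~\ref{Lem:Delta1}.

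I expect the main obstacle to be conceptual rather than computational: the particular edge $e$ realizing $\Delta(r)=\Delta_e(r)$ is itself selected by the random weights, so one cannot simply apply Lemma~\ref{Lem:Delta1} to a single predetermined edge. The union bound over all red edges is exactly the device that removes this dependence, at the expense of the full sum $\sum_e\phi_e$. A secondary point needing care is the conditioning implicit in Lemma~\ref{Lem:Delta1}, where the estimate $\phi_e\varepsilon$ is produced treating $s_r$ as a constant, that is, with the weights of the remaining red edges held fixed. To pass to the unconditional statement one reads each $\mathbb{P}(r<\Delta_e(r)\le r+\varepsilon)$ through the law of total probability over those remaining weights; since $\omega(e)$ keeps its density $f_e$ once they are fixed and the per-conditioning bound $\phi_e\varepsilon$ is uniform, it survives the integration unchanged.
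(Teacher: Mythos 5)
Your proof is correct and follows essentially the same route as the paper's: use Lemma~\ref{Lem:Delta2} to place the event $\{\Delta(r)\le r+\varepsilon\}$ inside the union over red edges of $\{r<\Delta_e(r)\le r+\varepsilon\}$, then apply a union bound and Lemma~\ref{Lem:Delta1} termwise. Your additional remarks on the randomly selected edge and on conditioning over the remaining weights are sensible refinements of the same argument, not a different approach.
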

\begin{proof}
If $ r < \Delta(r) \le r + \varepsilon$ then by Lemma \ref{Lem:Delta2}, there exists a
red edge $e$ for which $ r < \Delta_e(r) \le r + \varepsilon$. Hence using a union
bound and Lemma \ref{Lem:Delta1},
\begin{eqnarray*}
  \mathbb{P}(r < \Delta(r) \le r + \varepsilon)
  &\le& \sum_{\mbox{\scriptsize red edge $e$}} \mathbb{P}(r < \Delta_e(r) \le r + \varepsilon)\\
  &\le& \sum_{\mbox{\scriptsize red edge $e$}} \phi_e \cdot \varepsilon.
\end{eqnarray*}
\end{proof}

Corollary \ref{Cor:Delta} provides the main argument to establish a
bound on the expected number of minimal paths for a bicoloured--edge graph.

\begin{theorem}\label{Thm:ExpectedPareto}
Let $\gra{G}$ be a finite weighted bicoloured--edge graph and let $u$, $v$ be vertices of $\gra{G}$
for which there is a pure colour path from $u$ to $v$ in each of the two colours.
Suppose that the weights of all edges $e$ in one of the colours are random variables
with probability density functions bounded above by $\phi_e$, and let $r_{tot}$ denote
the weight of the minimal pure colour path in that colour.
Then the expected number of Pareto minimal elements with distinct weights is bounded
above by $\sum_e \phi_e \cdot r_{tot} +1$.
\end{theorem}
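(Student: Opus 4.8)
The plan is to sweep along the red axis and charge each distinct weight on the Pareto frontier to the small red-interval in which it appears, then bound the expected number of occupied intervals using Corollary~\ref{Cor:Delta}. Taking red to be the colour whose edge weights are randomised, I would first record two structural facts. Because $\gra{G}$ is finite there are only finitely many paths, hence only finitely many Pareto minimal weights; and since two minimal paths with equal red weight would be comparable, the red coordinates of distinct Pareto weights are pairwise distinct, so I may list them as $0\le x_1<x_2<\cdots<x_N=r_{tot}$. Moreover every minimal path has red weight at most $r_{tot}$, for otherwise the minimal pure red path (red weight $r_{tot}$, green weight $0$) would dominate it. As at most one Pareto point can have red weight exactly $0$, it suffices to bound the expected number $N'$ of Pareto points whose red weight lies in $(0,r_{tot}]$ and then add $1$, since $N\le N'+1$.

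Next I would discretise. Fix $\varepsilon>0$, put $r_i=i\varepsilon$, and consider the $\lceil r_{tot}/\varepsilon\rceil$ half-open intervals $(r_i,r_{i+1}]$ for $i=0,\dots,\lceil r_{tot}/\varepsilon\rceil-1$, which cover $(0,r_{tot}]$; note $r_i<r_{tot}$ throughout, so $\Delta(r_i)$ is defined. The event that some minimal path has red weight in $(r_i,r_{i+1}]$ is precisely $\{\Delta(r_i)\le r_i+\varepsilon\}$, because $\Delta(r_i)$ is the least red weight of a minimal path exceeding $r_i$. Writing $X_\varepsilon=\sum_i \mathbf{1}[\Delta(r_i)\le r_i+\varepsilon]$ for the number of occupied intervals, each occupied interval contains at least one of the $N'$ points, so $X_\varepsilon\le N'$, while for $\varepsilon$ smaller than the least gap between consecutive $x_j$ each point sits alone in its interval and $X_\varepsilon=N'$. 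Taking expectations and applying Corollary~\ref{Cor:Delta} termwise gives
\[
  \mathbb{E}[X_\varepsilon]=\sum_i \mathbb{P}\left(\Delta(r_i)\le r_i+\varepsilon\right)
  \le \lceil r_{tot}/\varepsilon\rceil\left(\sum_e \phi_e\right)\varepsilon.
\]

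The final step is to let $\varepsilon\to0$. Pointwise $X_\varepsilon\to N'$ by the argument above, and $X_\varepsilon$ is dominated by the deterministic total number of paths in $\gra{G}$, so by dominated convergence $\mathbb{E}[N']=\lim_{\varepsilon\to0}\mathbb{E}[X_\varepsilon]$; since $\lceil r_{tot}/\varepsilon\rceil\,\varepsilon\to r_{tot}$, the right-hand side tends to $\left(\sum_e\phi_e\right)r_{tot}$, whence $\mathbb{E}[N]\le\mathbb{E}[N']+1\le\sum_e\phi_e\cdot r_{tot}+1$. I expect the main obstacle to be exactly this bookkeeping in the limit: one must verify that for small $\varepsilon$ each frontier point is charged to precisely one interval (so that $X_\varepsilon$ genuinely recovers $N'$, neither over- nor under-counting), and justify exchanging limit with expectation — which is where the finiteness of $\gra{G}$, guaranteeing a deterministic bound on the number of paths, is essential.
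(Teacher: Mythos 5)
Your proposal is correct and follows essentially the same route as the paper: partition $(0,r_{tot}]$ into small red-intervals, identify the event that an interval contains a minimal path with $\{\Delta(r_i)\le r_i+\varepsilon\}$, bound each term by Corollary~\ref{Cor:Delta}, and sum to get $\sum_e\phi_e\cdot r_{tot}+1$. The only difference is cosmetic: the paper fixes $\kappa$ above a (realisation-dependent) threshold $\kappa_{min}$ so that each cell holds at most one minimal path, whereas you let $\varepsilon\to 0$ and invoke dominated convergence, which is a slightly more careful handling of the same limiting step.
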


\begin{proof}
As previously denote the colours by $\{\mbox{red},\mbox{green}\}$ for convenience.
As $\gra{G}$ is finite there are only finitely many minimal paths $q$
from $u$ to $v$, and they all have red weight between $0$ and $r_{tot}$
inclusive.

Partition the interval $(0,r_{tot}]$ into $\kappa$ equal subintervals
and note that since only minimal paths with distinct (red) weights
are considered, there is a threshold $\kappa_{min}$ above which each
interval can contain at most one minimal path. Hence for all $\kappa \ge \kappa_{min}$,
the expected number of distinct minimal paths is
\[1+\sum_{i=0}^{\kappa-1}\mathbb{P}\left(\exists \; \mbox{minimal path} \;q \; \mbox{with}\;\frac{r_{tot}}{\kappa}i<\omega_{\mbox{\scriptsize red}}(q)\leq\frac{r_{tot}}{\kappa}(i+1)\right)\]
including the minimal pure green path that has red weight $0$.
Figure \ref{Fig:Delta03} depicts the partition of $(0,r_{tot}]$.

\begin{figure}[h]
\begin{center}
\scalebox{0.65}{
\begin{pspicture}(-0.5,-0.5)(12,12)
  \psdot[dotsize=0.3](0,10.5)
  \psdot[dotsize=0.3](1.5,8.5)
  \psdot[dotsize=0.3](2.5,6.5)
  \psdot[dotsize=0.3](5,5)
  \psdot[dotsize=0.3](5.5,3.5)
  \psdot[dotsize=0.3](7.5,3)
  \psdot[dotsize=0.3](9.5,2.5)
  \psdot[dotsize=0.3](11,0)\uput{2mm}[270](11,0){$r_{tot}$}

  \pcline[linestyle=dotted,linewidth=.05](0.46,0)(0.46,12)
  \pcline[linestyle=dotted,linewidth=.05](0.92,0)(0.92,12)
  \pcline[linestyle=dotted,linewidth=.05](1.38,0)(1.38,12)
  \pcline[linestyle=dotted,linewidth=.05](1.84,0)(1.84,12)
  \pcline[linestyle=dotted,linewidth=.05](2.3,0)(2.3,12)
  \pcline[linestyle=dotted,linewidth=.05](2.76,0)(2.76,12)
  \pcline[linestyle=dotted,linewidth=.05](3.22,0)(3.22,12)
  \pcline[linestyle=dotted,linewidth=.05](3.68,0)(3.68,12)
  \pcline[linestyle=dotted,linewidth=.05](4.14,0)(4.14,12)
  \pcline[linestyle=dotted,linewidth=.05](4.6,0)(4.6,12)
  \pcline[linestyle=dotted,linewidth=.05](5.06,0)(5.06,12)
  \pcline[linestyle=dotted,linewidth=.05](5.52,0)(5.52,12)
  \pcline[linestyle=dotted,linewidth=.05](5.98,0)(5.98,12)
  \pcline[linestyle=dotted,linewidth=.05](6.44,0)(6.44,12)
  \pcline[linestyle=dotted,linewidth=.05](6.9,0)(6.9,12)
  \pcline[linestyle=dotted,linewidth=.05](7.36,0)(7.36,12)
  \pcline[linestyle=dotted,linewidth=.05](7.82,0)(7.82,12)
  \pcline[linestyle=dotted,linewidth=.05](8.28,0)(8.28,12)
  \pcline[linestyle=dotted,linewidth=.05](8.74,0)(8.74,12)
  \pcline[linestyle=dotted,linewidth=.05](9.2,0)(9.2,12)
  \pcline[linestyle=dotted,linewidth=.05](9.66,0)(9.66,12)
  \pcline[linestyle=dotted,linewidth=.05](10.12,0)(10.12,12)
  \pcline[linestyle=dotted,linewidth=.05](10.58,0)(10.58,12)
  \pcline[linestyle=dotted,linewidth=.05](11.04,0)(11.04,12)

  \pcline[linewidth=.02]{<->}(0,-0.25)(0.45,-0.25)
  \uput{0mm}[-90](0.225,-0.4){$\frac{r_{tot}}{\kappa}$}

  \psline[linewidth=0.06]{<->}(0,12)(0,0)(12,0)
  \uput{5mm}[270](12,0){red}
  \uput{7mm}[180](0.5,12){green}
\end{pspicture} }
\end{center}
\caption{Pareto minimal elements and partition of the interval $(0,r_{tot}]$.}
\label{Fig:Delta03}
\end{figure}
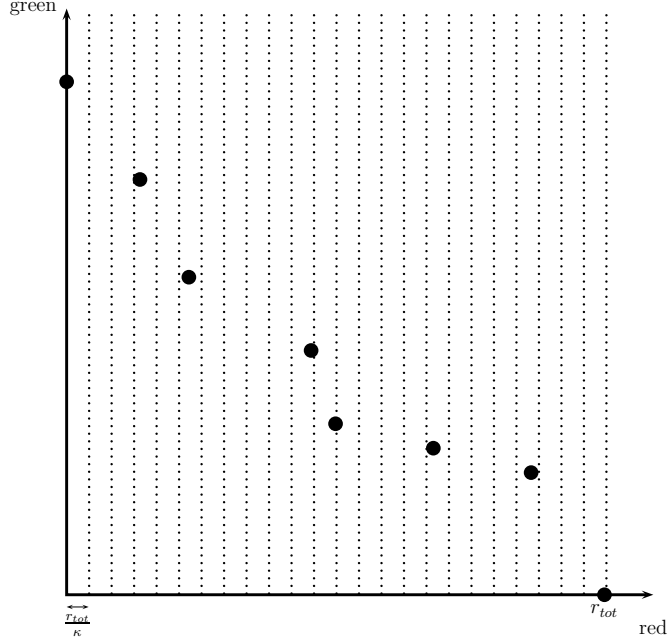

\noindent Now for each $i$ there is a minimal path $q$ with
$\frac{r_{tot}}{\kappa}i<\omega_{\mbox{\scriptsize red}}(q)\leq\frac{r_{tot}}{\kappa}(i+1)$
if and only if $\Delta(\frac{r_{tot}}{\kappa}i)\leq \frac{r_{tot}}{\kappa}(i+1)$
which has probability bounded above by $(\sum \phi_e) \frac{r_{tot}}{\kappa}$
by Corollary \ref{Cor:Delta}. Hence the expected number of minimal paths with distinct
weights is bounded by
\begin{displaymath}
  1+\sum_{i=0}^{\kappa-1}\left(\sum \phi_e\right) \frac{r_{tot}}{\kappa}
  = 1+ (\sum \phi_e)\cdot r_{tot}.
\end{displaymath}
\end{proof}

Note that if each red edge has weight bounded by $r_{max}$ then $r_{tot}$ is
$O(n)$, so the expected number of minimal paths is $O(mn)+1$ where $m$ denotes
the number of red edges. Furthermore, there are only at most $n^2-3n+3$ red edges
to consider in a graph with $n$ vertices so the order is bounded by $O(n^3)$.

\section{Conclusions}

For weighted coloured--edge graphs to be successfully utilized
in multimodal network applications the number of minimal paths needs to be
manageable, as each minimal path may need to be further investigated to determine
a sought optimal path or paths in a particular application.
The chain example from Section \ref{Sec:Introduction} with exponentially growing
weights illustrates that there can be an exponential number of incomparable minimal
paths in a weighted coloured--edge graph.
However, Theorem \ref{Thm:IncomparableMinimalPaths} shows that this is a tight bound,
and the fact that there can only be an exponential number $k^{n-1}$ of minimal paths
rather than potentially a factorial number $O\left(k^{n-1}(n-2)!\right)$ is surprising.
As a consequence, even in the worst case moderately sized graphs (with around $20-30$
vertices) can still be feasibly tackled.

Experimental studies undertaken by the authors indicate that the number of minimal
paths in real networks is typically a low-order polynomial function of $n$,
so very large networks can be studied in practice.
Theorem \ref{Thm:ExpectedPareto} justifies this observation for bicoloured--edge
graphs whose edge weights are randomly drawn from a bounded
probability density function,
showing that only $O\left(n^3\right)$ minimal paths are expected.
It is presumed that this polynomial bound can be substantially reduced and that a similar result
is also true for coloured--edge graphs with $k>2$ colours.



\section*{Acknowledgements}
This research was partly supported by Catolica del Maule
University, Talca--Chile, through the project MECESUP--UCM0205.

\bibliographystyle{elsarticle-num}
\bibliography{cocoa}{}

\end{document}